\numberwithin{equation}{section}
\def\p{\partial}
\def\o{\overline}
\def\b{\bar}
\def\mb{\mathbb}
\def\mc{\mathcal}
\theoremstyle{plain}
\newtheorem{thm}{Theorem}[section]
\newtheorem{lemma}[thm]{Lemma}
\newtheorem{prop}[thm]{Proposition}
\theoremstyle{definition}
\theoremstyle{definition}
\newtheorem{defn}[thm]{Definition}
\newcommand{\comment}[1]{}
\def\op{\operatorname}
\newenvironment{aligns}{\equation\aligned}{\endaligned\endequation}
\begin{document}

\title{Harmonic maps between  surfaces homotopic to a  (branched) covering map}
\author{InKang Kim}
\author{Xueyuan Wan}

\address{Inkang Kim: School of Mathematics, KIAS, Heogiro 85, Dongdaemun-gu Seoul, 02455, Republic of Korea}
\email{inkang@kias.re.kr}

\address{Xueyuan Wan: School of Mathematics, Korea Institute for Advanced Study, Seoul 02455, Republic of Korea}
\email{xwan@kias.re.kr}

\begin{abstract}

In the paper, we consider the harmonic maps 
 between surfaces $\Sigma$ and $S$ in the homotopy class of a (branched) covering map $u_0$. We prove the uniqueness of critical points of energy function and the injectivity of Hopf differential if $u_0$ is a covering map.  On the other hand, if $u_0$ is a branched covering, we show that the uniqueness of critical points fails if $u_0$ is a non-simple branched covering, and prove the injectivity of Hopf differential $\Phi:\mc{T}(S)\to \op{QD}(\Sigma,g)$ when $g=[u_0^* h]$ for some hyperbolic metric $h$ on $S$.
 
\end{abstract}

  \subjclass[2020]{53C43, 30F60, 35B38}  
  \keywords{Harmonic map, Riemann surface, Branched covering, Teichm\"uller space, Energy function, Hopf differential}
  \thanks{Research by Inkang Kim is partially supported by Grant NRF-2019R1A2C1083865 and KIAS Individual Grant (MG031408), and Xueyuan Wan is supported by KIAS Individual Grant (MG075801) at Korea Institute for Advanced Study.}

\maketitle

\section*{Introduction}

Let $u_0:\Sigma\to S$ be a smooth map between two Riemann surfaces of genus $\geq 2$ and $h$ be a hyperbolic metric on $S$. It is known that if $u_0$ is a map of nonzero degree,  it can be homotoped to a map which factors through a pinch map and a branched covering map \cite{Ed}. In this reason we deal with (branched) covering maps.

 Eells and Sampson \cite{ES} proved the existence of  a harmonic map between $\Sigma$ and $(S,h)$ in the   homotopy class $[u_0]$, and Hartman \cite{Hart} proved the uniqueness. Especially when  $u_0$ is the identity map, Schoen-Yau \cite{SY0} and Sampson \cite{Sampson} showed the harmonic map is a diffeomorphism. In this case, Tromba \cite{Tromba} proved the uniqueness of critical points of Dirichlet's energy, Sampson \cite{Sampson} proved the injectivity of the Hopf differential of the harmonic map, and  the surjectivity was proved by Wolf \cite{Wolf0}. In this paper, we will  consider the related problems in the case $u_0$ is a (branched) covering map. 

Firstly, we assume $u_0:\Sigma\to S$ is a covering map. For each $t$ in Teichm\"uller space $\mc{T}(\Sigma)$ of $\Sigma$, there exists a unique harmonic map $u_t$ in the homotopy class $[u_0]$, and the energy $E(u_t)$ depends on $t$ smoothly, hence one obtains a well-defined energy function $E(t)$ on Teichm\"uller space $\mc{T}(\Sigma)$, see e.g. \cite{Tromba}. In \cite{KWZ}, G. Zhang and the authors calculated the second variation of the energy function, and proved the convexity of the energy functions at its critical points.  Furthermore it is strictly convex  when $u_0$ is a covering map. Using the argument of Morse theory, we proved the following result in {\cite[Corollary 0.2]{KWZ}}.
\begin{thm}\label{thm1}
	If $u_0:\Sigma\to S$ is a covering map, then the associated energy function has only one critical point.
\end{thm}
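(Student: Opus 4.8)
The plan is to turn the strict convexity established in \cite{KWZ} into a global uniqueness statement by a Morse-theoretic argument on the contractible manifold $\mc{T}(\Sigma)$. Recall that $\mc{T}(\Sigma)$ is diffeomorphic to $\mathbb{R}^{6g-6}$, so it is contractible, and that $E$ is smooth on it. The content of the strict convexity at critical points is that the Hessian of $E$ is positive definite at every critical point; in particular each critical point of $E$ is nondegenerate and of Morse index $0$, that is, a strict local minimum. Thus it suffices to show that a smooth, proper function on a contractible manifold all of whose critical points are nondegenerate minima has exactly one critical point, and to verify that $E$ is indeed proper.

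The key analytic input is the properness of $E$, namely that $E(t)\to\infty$ as $t$ leaves every compact subset of $\mc{T}(\Sigma)$. By Mumford's compactness criterion, a divergent sequence in $\mc{T}(\Sigma)$ pinches some essential simple closed curve $\gamma$, meaning the hyperbolic length $\ell_t(\gamma)\to 0$ and a flat collar of modulus $M\to\infty$ develops around $\gamma$. Since $u_0$ is a covering map it is $\pi_1$-injective, so the class $[u_0\circ\gamma]$ is nontrivial in $S$ and hence has positive $h$-geodesic length $L>0$. Restricting $u_t$ to the collar $S^1\times[0,M]$ of circumference $1$, each horizontal circle maps to a loop freely homotopic to $u_0\circ\gamma$, so its image has $h$-length at least $L$; using conformal invariance of the Dirichlet energy in dimension two together with Cauchy--Schwarz one obtains
\[
E(t)\;=\;E(u_t)\;\geq\;\frac12\int_0^M\Big(\int_{S^1}|\partial_x u_t|\,dx\Big)^2\,ds\;\geq\;\frac12 L^2 M\;\longrightarrow\;\infty .
\]
This forces $E(t)\to\infty$ along every pinching sequence, which is exactly properness. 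I expect this estimate to be the main obstacle: the nondegeneracy of the Hessian is handed to us by the second-variation formula of \cite{KWZ}, whereas properness is where the covering hypothesis genuinely enters (through $\pi_1$-injectivity) and where the behavior of the harmonic-map energy under degeneration of the domain conformal structure must be controlled.

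Finally I would assemble the conclusion. Properness guarantees that $E$ attains a global minimum, so the critical set is nonempty, and it supplies the Palais--Smale compactness needed to run Morse theory on the noncompact manifold $\mc{T}(\Sigma)$. Writing $c_k$ for the number of critical points of index $k$ and $b_k$ for the Betti numbers, the strong Morse inequalities assert that $\sum_k c_k t^k-\sum_k b_k t^k=(1+t)Q(t)$ for some polynomial $Q$ with nonnegative coefficients. Here every critical point has index $0$, so the left-hand side equals $c_0-b_0=c_0-1$, a constant; hence $Q\equiv 0$ and $c_0=1$. Equivalently, were there two distinct local minima, the mountain-pass lemma would furnish a critical point of index $\geq 1$, contradicting that all critical points are minima. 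Therefore $E$ has exactly one critical point, as claimed.
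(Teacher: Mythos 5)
Your argument is essentially correct, but it is not the proof this paper gives: it is the Morse-theoretic route of \cite{KWZ}, which the authors explicitly set out to replace here with a softer, direct argument. The paper's proof runs as follows: at any critical point $t$ the harmonic map $u_t:(\Sigma,t)\to(S,h)$ realizes the topological lower bound $|\deg u_0|\,\mathrm{Area}(S)$ and is therefore $\pm$-holomorphic, which forces $t=[u_t^*h]$; since $u_t$ is then a local isometry from $(\Sigma,u_t^*h)$, composing it with the unique harmonic map $f:(\Sigma,u_0^*h)\to(\Sigma,u_t^*h)$ homotopic to the identity yields a harmonic map homotopic to $u_0$, so Hartman's uniqueness gives $u_0=u_t\circ f$, hence $u_0^*h=f^*u_t^*h$, $f$ is conformal, and $t=[u_t^*h]=[u_0^*h]$. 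That argument uses only the uniqueness of harmonic maps in a homotopy class and the equality case of the degree inequality; yours instead requires two substantial analytic inputs, namely the second-variation formula of \cite{KWZ} (positive-definiteness of the Hessian at critical points, where the covering hypothesis enters there) and properness of $E$ on $\mc{T}(\Sigma)$. Your properness sketch is the standard collar estimate and is fine in outline, though you should note that the lower bound $L$ must be taken uniform over the pinching curves --- this is supplied by the systole of the fixed target $(S,h)$ together with $\pi_1$-injectivity of $u_0$ --- and the Morse-inequality (or mountain-pass) endgame on the contractible, finite-dimensional $\mc{T}(\Sigma)$ with compact sublevel sets is unobjectionable. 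What your approach buys is robustness and quantitative information (nondegeneracy of the unique critical point, which is a strict global minimum); what the paper's approach buys is brevity and independence from the second-variation computation and from any compactness analysis at the boundary of Teichm\"uller space.
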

In this paper we will give a direct proof of the above result by using the uniqueness of harmonic maps in a fixed homotopy class \cite{Hart}. More precisely, if $u_1$ and $u_2$ are two harmonic maps homotopic to $u_0$, then $u_i,i=1,2$ are covering maps and $u_i: (\Sigma, u_i^*h)\to (S,h)$ are local isometries. Let $f:(\Sigma,u_1^*h)\to (\Sigma,u_2^*h)$ be the unique harmonic map and homotopic to identity, thus $u_2\circ f:(\Sigma,u_1^*h)\to (S,h)$ is also a harmonic map and homotopic to $u_0$, which implies that $u_1=u_2\circ f$ by the uniqueness of harmonic maps. So $f:(\Sigma,u_1^*h)\to (\Sigma,u_2^*h)$ is a conformal map, and $[u_1^*h]=[u_2^*h]$ in Teichm\"uller space $\mc{T}(\Sigma)$. See Theorem \ref{unique}.
Here  $[\bullet]$ denotes the  class in Teichm\"uller space  $\mc{T}(\Sigma)=\mc{M}(\Sigma)/\op{Diff}_0(\Sigma)$, where $\mc{M}(\Sigma)$ is the space the Riemannian metrics   on $\Sigma$ and $\op{Diff}_0(\Sigma)$ is the group of all  orientation-preserving diffeomorphisms of $\Sigma$ which are homotopic to the identity map where the quotient is given by the equivalence relation that two metrics are conformal under the orientation-preserving diffeomorphisms homotopic to the identity, see e.g. \cite[Theorem 1.8]{IT}. 

If $u_0:\Sigma\to S$ is a branched covering, one may wonder whether the critical point of energy function is unique. On this problem, we prove that the uniqueness fails in general. More precisely,
\begin{prop}\label{prop2}
If $u_0:\Sigma\to S$ is a non-simple branched covering, then the associated energy function has at least two critical points.
\end{prop}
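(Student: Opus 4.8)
The plan is to exhibit two distinct points of $\mc{T}(\Sigma)$ at which $E$ attains its global minimum; since $\mc{T}(\Sigma)$ is an open manifold, each such point is automatically a critical point. The engine is the elementary lower bound satisfied by every map: writing $\omega_h$ for the area form of $(S,h)$, for any $u$ homotopic to $u_0$ and any metric $g$ on $\Sigma$ one has, with the standard normalisation of the energy,
\[
E_g(u)\ \ge\ \tfrac12\int_\Sigma u^*\omega_h\ =\ \tfrac12\,\deg(u_0)\,\op{Area}(S,h)\ =:\ \mc{C},
\]
because the energy density dominates the Jacobian pointwise while $\int_\Sigma u^*\omega_h$ is the topological degree term, independent of $g$. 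Equality holds exactly when $\bar\partial u\equiv 0$, i.e. when $u$ is holomorphic (I assume $u_0$ orientation preserving; otherwise exchange holomorphic for anti-holomorphic throughout). Hence $E(t)\ge\mc{C}$ for all $t\in\mc{T}(\Sigma)$, with $E(t)=\mc{C}$ precisely when the harmonic representative $u_t$ is holomorphic, that is, when $t=[u^*h]$ for some holomorphic branched covering $u$ homotopic to $u_0$. So it suffices to produce two holomorphic branched coverings homotopic to $u_0$ whose pulled-back conformal structures are distinct in $\mc{T}(\Sigma)$.

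The first minimum is $u_0$ itself: giving $\Sigma$ the complex structure $u_0^{*}J_S$ pulled back from the complex structure $J_S$ of $S$ makes $u_0$ holomorphic, so $E(t_0)=\mc{C}$ for $t_0:=[u_0^*h]$. To produce a second minimum I would split a branch value. Non-simplicity provides a branch value $q\in S$ whose monodromy $\sigma$ is not a single transposition, so it factors as $\sigma=\tau_1\cdots\tau_r$ into $r\ge 2$ transpositions. Placing these over $r$ distinct points near $q$ and leaving every other branch value and monodromy of $u_0$ unchanged, the new global monodromy still contains all the old generators (it recovers $\sigma$ as the product $\tau_1\cdots\tau_r$), hence the old transitive group, so by the Riemann existence theorem there is a holomorphic branched covering $u_1$ of $S$ with connected domain realising this data. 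By Riemann--Hurwitz the total ramification, and hence the genus, is unchanged, so the domain is identified with $\Sigma$ and $E(t_1)=\mc{C}$ for $t_1:=[u_1^*h]$. That $u_1$ is homotopic to $u_0$ is seen by letting the $r$ new branch points coalesce back to $q$, which displays $u_1$ and $u_0$ as the two ends of a continuous family of branched coverings $\Sigma\to S$ and hence places them in one homotopy class. This confluence is the only substantial step, and it is exactly where non-simplicity is needed—over a simple branch value $\sigma$ is already a transposition and cannot be split—matching the uniqueness results for genuine coverings. By construction $u_1$ has $r-1\ge 1$ more branch values than $u_0$.

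It remains to check $t_0\ne t_1$. The set of branch (critical) values in $S$ is invariant under precomposition by any diffeomorphism, so $u_0$ and $u_0\circ\phi$ have the same number of branch values for every $\phi\in\op{Diff}_0(\Sigma)$, whereas $u_1$ has strictly more. Suppose $t_0=t_1$ in $\mc{T}(\Sigma)=\mc{M}(\Sigma)/\op{Diff}_0(\Sigma)$; then some $\phi\in\op{Diff}_0(\Sigma)$ induces a biholomorphism $(\Sigma,u_1^{*}J_S)\to(\Sigma,u_0^{*}J_S)$, so that $u_1\circ\phi^{-1}$ and $u_0$ are both holomorphic, hence harmonic, maps $(\Sigma,u_0^{*}J_S)\to(S,h)$ in the class $[u_0]$. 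Hartman's uniqueness of harmonic maps into the negatively curved target $(S,h)$ then forces $u_1\circ\phi^{-1}=u_0$, i.e. $u_1=u_0\circ\phi$, which would give $u_1$ and $u_0$ the same number of branch values—a contradiction. Hence $t_0\ne t_1$, and $E$ attains its minimum at two different points, so it has at least two critical points. The main obstacle is entirely in the middle paragraph: realising the split cover $u_1$ holomorphically while certifying that it stays in the homotopy class $[u_0]$; the lower bound and the distinctness argument are then comparatively routine.
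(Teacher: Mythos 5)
Your argument is correct and its skeleton is the same as the paper's: both proofs exhibit two global minima of $E$ by producing two ($\pm$)-holomorphic branched coverings homotopic to $u_0$, and both certify that the two pulled-back conformal classes differ by invoking Hartman's uniqueness to deduce that equality in $\mc{T}(\Sigma)$ would force the two coverings to differ by precomposition with a diffeomorphism, contradicting a branching invariant. The difference lies in how the second covering is produced and which invariant is used. The paper simply cites Berstein--Edmonds (Proposition \ref{prop-b}) to replace $u_0$ by a $b$-homotopic \emph{simple} branched covering and then compares local degrees at a singular point; you instead build the second covering by hand, splitting one non-transposition monodromy $\sigma$ into $r=n-c(\sigma)$ transpositions over nearby points and invoking the Riemann existence theorem, and you distinguish the two classes by counting branch values. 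Your distinguishing invariant is in fact more robust: the paper's local-degree comparison only addresses non-simplicity witnessed by a point of local degree $>2$ and is silent on the case of two degree-$2$ singular points in one fiber, whereas the number of branch values separates the two coverings in all cases. The price you pay is that the coalescing step --- certifying that $u_1$ and $u_0$ are joined by a continuous family of branched coverings from the fixed surface $\Sigma$, so that in particular the Riemann-existence domain is identified with $\Sigma$ and $u_1\in[u_0]$ --- is precisely the content of the cited result of Berstein--Edmonds; you correctly flag it as the one substantial step, but as written it is asserted rather than proved, and you should also say explicitly that the factorization of $\sigma$ is taken with the minimal number $r=n-c(\sigma)$ of transpositions so that Riemann--Hurwitz keeps the genus fixed. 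With either the citation supplied or that local model worked out, your proof is complete.
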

Here, a {\it simple branched covering} is a branched covering that each fiber contains at most one singular point and that of  local degree $2$, see Definition \ref{sbc}, and a {\it non-simple branched covering} is a branched covering which is not simple. 

We also consider the injectivity of Hopf differential. Let $u_0:\Sigma\to S$ be a smooth map with a fixed  hyperbolic metric $g$ on $\Sigma$.  For each hyperbolic metric $h$ on $S$, there exists a unique harmonic map $u_h:(\Sigma,g)\to (S,h)$ which is homotopic to $u_0$. The {\it Hopf differential} of $u_h$ is given by $(u_h^*h)^{2,0}$, which is holomorphic since $u_h$ is a harmonic map. Thus one obtains the following map 
$$\Phi:\mc{T}(S)\to \op{QD}(\Sigma,g), \quad \Phi(h)=(u^*_hh)^{2,0},$$ 
where $\operatorname{QD}(\Sigma,g)$ denotes the space of holomorphic quadratic differentials on $(\Sigma,g)$. Here we use $h$ to denote the point $[h]$ in $\mc{T}(S)$ since each conformal class contains a unique hyperbolic metric. If $u_0$ is the identity map, Sampson \cite{Sampson} showed that $\Phi$ is injective, and we generalize the result to the case $u_0$ is a covering map. 
\begin{thm}\label{thm3}
If $u_0:\Sigma\to S$ is a covering map, then $\Phi$ is injective. 	
\end{thm}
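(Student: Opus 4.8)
The plan is to reconstruct, from the single holomorphic quadratic differential $\Phi(h)$, first the energy densities of $u_h$, then the whole pullback metric $u_h^{\ast}h$, and finally the marked hyperbolic structure $[h]\in\mc T(S)$. Fix a conformal coordinate $z$ on $(\Sigma,g)$, write $g=\lambda\,|dz|^2$, and for a hyperbolic metric $h=\eta\,|dw|^2$ on $S$ set $\mc H=\tfrac{\eta}{\lambda}|\p_z u_h|^2$ and $\mc L=\tfrac{\eta}{\lambda}|\p_{\b z}u_h|^2$ for the holomorphic and antiholomorphic energy densities. They satisfy $\mc H\,\mc L=\|\Phi(h)\|_g^2$ and, both curvatures being $-1$, the Bochner identity
\[
\Delta_g\log\mc H=-2+2(\mc H-\mc L).
\]
First I would show $\mc H>0$ everywhere. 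Indeed $\p u_h$ is a holomorphic section of $(T^{1,0}\Sigma)^{\ast}\otimes u_h^{\ast}T^{1,0}S$, whose degree is $-\chi(\Sigma)+d\,\chi(S)=0$, where $d=\deg u_0$ and $\chi(\Sigma)=d\,\chi(S)$ since $u_0$ is a degree-$d$ covering; a nonzero holomorphic section of a degree-$0$ line bundle is nowhere vanishing, and $\p u_h\not\equiv0$ because $\deg u_h=d>0$. Hence $w:=\log\mc H$ is globally smooth and, substituting $\mc L=\|\Phi(h)\|_g^2\,e^{-w}$,
\[
\Delta_g w=-2+2e^{w}-2\|\Phi(h)\|_g^2\,e^{-w}.
\]

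Since the right-hand side is strictly increasing in $w$ for fixed $\Phi(h)$, this equation has at most one solution: if $w_1,w_2$ solve it with the same $\Phi(h)$, then at an interior maximum of $w_1-w_2$ the difference of the two equations forces $\max(w_1-w_2)\le0$, and symmetrically $\ge0$, so $w_1=w_2$ by the maximum principle. Thus $\mc H$, and then $\mc L=\|\Phi(h)\|^2_g/\mc H$, are determined by $\Phi(h)$ alone, and via the pointwise identity $u_h^{\ast}h=\Phi(h)+\o{\Phi(h)}+(\mc H+\mc L)\,g$ the entire pullback metric is determined by $\Phi(h)$. Consequently, if $\Phi(h_1)=\Phi(h_2)$ then $u_{h_1}^{\ast}h_1=u_{h_2}^{\ast}h_2=:G$. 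A parallel maximum-principle argument applied to $\log(\mc L/\mc H)$, whose Laplacian is $4(\mc L-\mc H)$, together with $\int_\Sigma(\mc H-\mc L)\,dA_g=d\cdot\op{Area}(S,h)>0$, shows $\mc H>\mc L$ everywhere; hence each $u_{h_i}$ is an orientation-preserving local diffeomorphism, therefore a covering map, so that $u_{h_i}:(\Sigma,G)\to(S,h_i)$ is a Riemannian covering and in particular $G$ is hyperbolic.

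The remaining and hardest step is to pass from ``the two pullback metrics coincide'' to ``the two targets coincide in $\mc T(S)$.'' Writing $\mb H$ for the hyperbolic plane and $\iota=(u_0)_{\ast}:\pi_1(\Sigma)\hookrightarrow\pi_1(S)$, each local isometry $u_{h_i}$ lifts to an isometry $A_i\in\op{PSL}(2,\br)$ satisfying $\rho_i\circ\iota=A_i\,\rho_\Sigma\,A_i^{-1}$, where $\rho_\Sigma$ is the holonomy of $(\Sigma,G)$ and $\rho_i$ that of $h_i$. With $\Psi=A_2A_1^{-1}$ one gets $\Psi\,\rho_1(\gamma)\,\Psi^{-1}=\rho_2(\gamma)$ for all $\gamma\in\iota(\pi_1(\Sigma))$, i.e.\ the two Fuchsian representations of $\pi_1(S)$ agree, after conjugation, on the finite-index subgroup $\iota(\pi_1(\Sigma))$. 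Since a general covering need not be regular, this subgroup may fail to be normal, so I would replace it by its normal core $N\trianglelefteq\pi_1(S)$, still of finite index. On $N$ the representations $\rho_2$ and $\Psi\rho_1\Psi^{-1}$ coincide, and for any $\gamma\in\pi_1(S)$ normality gives $\gamma N\gamma^{-1}=N$, whence $(\Psi\rho_1\Psi^{-1})(\gamma)^{-1}\rho_2(\gamma)$ centralizes the nonelementary group $\rho_2(N)$ and is therefore trivial. Thus $\rho_2=\Psi\rho_1\Psi^{-1}$, the holonomies are conjugate as marked representations, and $[h_1]=[h_2]$ in $\mc T(S)$.

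I expect the genuine difficulty to lie in this last rigidity step: the Bochner identity and the maximum-principle uniqueness are standard once $\mc H>0$ is in hand, but upgrading agreement on a possibly non-normal finite-index subgroup to agreement on all of $\pi_1(S)$ is exactly where the covering hypothesis is indispensable, and the normal-core together with the triviality of the centralizer of a nonelementary Fuchsian group must be executed carefully. A secondary point demanding attention is the verification that $u_h$ is truly a covering map rather than merely an immersion, since it is this fact that makes $u_{h_i}:(\Sigma,G)\to(S,h_i)$ a Riemannian covering and legitimizes the holonomy comparison above.
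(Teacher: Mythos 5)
Your proof is correct, and it splits naturally into a half that coincides with the paper and a half that is genuinely different. The first half --- using $\mc H\mc L=\|\Phi(h)\|_g^2$, the Bochner identity and the maximum principle to show that $\Phi(h)$ determines $\mc H$, hence the whole pullback metric, so that $\Phi(h_1)=\Phi(h_2)$ forces $u_{h_1}^*h_1=u_{h_2}^*h_2$ --- is exactly the paper's Lemma \ref{lemma1} (following Sampson and Wolf); the only cosmetic difference is that you get $\mc H>0$ from a degree count on the holomorphic section $\p u_h$ of a degree-zero line bundle (using $\chi(\Sigma)=d\,\chi(S)$), whereas the paper factors $u_h=u_0\circ f_h$ and quotes Schoen--Yau/Sampson for the diffeomorphism $f_h$. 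The second half is where you diverge. The paper (Lemma \ref{prop11}) stays inside harmonic map theory: it lifts the harmonic map $F_1:(S,h_1)\to(S,h_2)$ homotopic to the identity through the covering $u_0$, identifies the lift with the isometry $f_{h_2}\circ f_{h_1}^{-1}$ by uniqueness of harmonic maps, and concludes that this isometry descends to a conformal map $F=F_1$ on $S$. You instead pass to Fuchsian holonomy: the local isometries $u_{h_i}:(\Sigma,G)\to(S,h_i)$ give $\rho_i\circ\iota=A_i\rho_\Sigma A_i^{-1}$, and you upgrade agreement of $\rho_1$ and $\Psi^{-1}\rho_2\Psi$ on the finite-index, possibly non-normal subgroup $\iota(\pi_1(\Sigma))$ to agreement on all of $\pi_1(S)$ via the normal core and the triviality of the centralizer of a nonelementary Fuchsian group; that step is sound. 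Both routes work. Yours avoids invoking the existence and uniqueness of $F_1$ and the homotopy lifting property, at the cost of some bookkeeping you should make explicit: free homotopy only determines $(u_{h_i})_*$ up to an inner automorphism of $\pi_1(S)$, so the two occurrences of $\iota$ must be normalized (the discrepancy is absorbed into the $A_i$ before forming $\Psi=A_2A_1^{-1}$), and one should note that $\Psi$ is orientation-preserving (which follows from $\mc H>\mc L$) so that the conjugacy of marked representations really yields $[h_1]=[h_2]$ in $\mc T(S)$. The paper's argument, conversely, produces the identifying isometry of $(S,h_1)$ with $(S,h_2)$ directly as a map of surfaces.
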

Following \cite{Sampson}, see also \cite{Wolf0}, if $\Phi(h_1)=\Phi(h_2)$ then $u_{h_2}^*h_2=u_{h_1}^*h_1$. Let $f_{h_i}:(\Sigma,g)\to (\Sigma,u_0^*h_i),\, i=1,2$ denote the harmonic maps homotopic to identity, then $u_{h_i}=u_0\circ f_{h_i}$. The key step is that we can show that the map $f_{h_2}\circ f_{h_1}^{-1}$ descends to a map between $(S,h_1)$ and $(S,h_2)$, i.e. the map $F:=u_0\circ (f_{h_2}\circ f_{h_1}^{-1})\circ u_0^{-1}$ is well-defined. It is a conformal map and homotopic to identity, so $h_1=h_2$ in $\mc{T}(S)$. 

It is also natural to ask whether $\Phi$ is injective for  a branched covering $u_0$. Here we can answer this question partially. 
\begin{prop}\label{prop3}
	If $u_0:\Sigma\to S$ is a branched covering, then $\Phi:\mc{T}(S)\to \op{QD}(\Sigma,[u_0^*h_0])$ is injective for any hyperbolic metric $h_0$ on $S$.
\end{prop}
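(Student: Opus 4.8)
The plan is to mirror the proof of Theorem \ref{thm3}, replacing the local-isometry factorization available for a covering by a \emph{holomorphic} factorization that survives ramification. The point of the hypothesis $g=[u_0^*h_0]$ is that it equips $\Sigma$ with the conformal structure for which $u_0$ is holomorphic: then $u_{h_0}=u_0$, and for every hyperbolic $h$ the map $u_0$ is holomorphic from $(\Sigma,[u_0^*h])$ onto $(S,[h])$. First I would run Sampson's argument exactly as in Theorem \ref{thm3}. Assuming $\Phi(h_1)=\Phi(h_2)$, the holomorphic energy density of a harmonic map into a curvature $-1$ surface satisfies a second order elliptic (Bochner) equation determined solely by the common Hopf differential and the curvatures, so the maximum principle forces the energy densities, hence the full pullback metrics, to agree: $u_{h_1}^*h_1=u_{h_2}^*h_2$. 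This step does not see the ramification of $u_0$.

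Next, for $i=1,2$ let $g_i$ be the hyperbolic metric in the class $[u_0^*h_i]$ and let $f_{h_i}\colon(\Sigma,g)\to(\Sigma,g_i)$ be the harmonic diffeomorphism homotopic to the identity (Schoen--Yau, Sampson, Hartman). Because $u_0\colon(\Sigma,g_i)\to(S,h_i)$ is holomorphic, the composition $u_0\circ f_{h_i}$ is again harmonic and homotopic to $u_0$, so by Hartman's uniqueness $u_{h_i}=u_0\circ f_{h_i}$. In particular each $u_{h_i}$ is a branched covering with the same ramification divisor as $u_0$, and $u_0\colon(\Sigma,G_i)\to(S,h_i)$ is a branched local isometry for the hyperbolic cone metric $G_i:=u_0^*h_i$. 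Feeding the equality of pullback metrics through this factorization gives $f_{h_1}^*G_1=f_{h_2}^*G_2$, that is, $f:=f_{h_2}\circ f_{h_1}^{-1}$ is an isometry $(\Sigma,G_1)\to(\Sigma,G_2)$ of hyperbolic cone surfaces, homotopic to the identity and permuting the cone points.

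Finally I would descend $f$ through $u_0$, which is where the real work lies. Passing to universal covers, I obtain developing maps $\wt u_0^{(i)}\colon(\wt\Sigma,\wt G_i)\to\mb H^2$ (branched local isometries), a lift $\wt f$ commuting with the $\pi_1(\Sigma)$-action (as $f$ is homotopic to the identity), and I set $\wt F:=\wt u_0^{(2)}\circ\wt f\circ(\wt u_0^{(1)})^{-1}$. The main obstacle is to show that this a priori multivalued local isometry is a single-valued global isometry of $\mb H^2$: here I would use that $\mb H^2$ is simply connected together with the holomorphicity at the ramification points to continue $\wt F$ across each branch value as a removable singularity. Equivariance of the three maps then yields $\wt F\rho_1(\alpha)\wt F^{-1}=\rho_2(\alpha)$ for every $\alpha$ in $\Gamma_0:=u_{0*}\pi_1(\Sigma)$, where $\rho_i$ denotes the holonomy of $(S,h_i)$.

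Since the restriction of $u_0$ to the complement of the ramification points is a finite covering onto $S$ minus the branch values, $\Gamma_0$ has finite index in $\pi_1(S)$; replacing $\Gamma_0$ by its normal core and using that the centralizer of a non-elementary subgroup of $\op{PSL}(2,\br)$ is trivial, the intertwining relation extends from $\Gamma_0$ to all of $\pi_1(S)$. Thus $\wt F$ conjugates $\rho_1$ to $\rho_2$ and descends to an isometry $F\colon(S,h_1)\to(S,h_2)$; since $\rho_2(F_*\beta)=\wt F\rho_1(\beta)\wt F^{-1}=\rho_2(\beta)$ shows $F_*=\op{id}$ on $\pi_1(S)$, the map $F$ is homotopic to the identity, and therefore $h_1=h_2$ in $\mc T(S)$. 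I expect the crux to be precisely the single-valued extension of $\wt F$ over the ramification, the branched-covering analogue of the ``well-defined descent'' step of Theorem \ref{thm3}, with the passage from $\Gamma_0$ to $\pi_1(S)$ a secondary but essential point.
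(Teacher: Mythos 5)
There is a genuine gap, and it occurs at the very first structural step. You factor $u_{h_i}=u_0\circ f_{h_i}$ by claiming that, because $u_0\colon(\Sigma,g_i)\to(S,h_i)$ is holomorphic, the composition $u_0\circ f_{h_i}$ of the harmonic map $f_{h_i}\colon(\Sigma,g)\to(\Sigma,g_i)$ with $u_0$ is again harmonic. That is the wrong composition law: \emph{pre}-composition of a harmonic map with a holomorphic map preserves harmonicity (harmonicity only depends on the conformal structure of the domain), but \emph{post}-composition with a holomorphic map does not, unless the outer map is totally geodesic. A direct computation with the Euler--Lagrange equation (\ref{EL}) shows that $u_0\circ f$ is harmonic into $(S,h_i)$ precisely when $f$ is harmonic into the \emph{pullback cone metric} $u_0^*h_i$, not into the smooth hyperbolic representative $g_i\in[u_0^*h_i]$; these differ exactly because $u_0$ is branched. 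So either your $f_{h_i}$ is the wrong map and the identity $u_{h_i}=u_0\circ f_{h_i}$ fails, or you must work with harmonic maps into hyperbolic cone surfaces, for which the existence, uniqueness and diffeomorphism statements you cite (Schoen--Yau, Sampson, Hartman) do not apply off the shelf. The same issue infects your opening Bochner step: in the branched case $H$ vanishes at the ramification points of $u_{h_i}$, so the maximum-principle argument for $\log(H_1/H_2)$ is not the unramified Sampson argument and does "see" the ramification. Finally, the descent of $\wt F$ across branch values --- which you yourself identify as the crux --- is only sketched; the monodromy/single-valuedness of $(\wt u_0^{(1)})^{-1}$ is exactly the fiber-preservation problem, and the homotopy-lifting argument of Lemma \ref{prop11} is unavailable because $u_0$ is not a covering.

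The paper sidesteps all of this by factoring on the other side, which is the legitimate direction of the composition law: since $u_0\colon(\Sigma,[u_0^*h_0])\to(S,h_0)$ is holomorphic and $f_i\colon(S,h_0)\to(S,h_i)$ is the harmonic map homotopic to the identity, the pre-composition $f_i\circ u_0$ is harmonic, so $u_{h_i}=f_i\circ u_0$ by Hartman's uniqueness. Then $\Phi(h_1)=\Phi(h_2)$ reads $u_0^*(f_1^*h_1)^{2,0}=u_0^*(f_2^*h_2)^{2,0}$, the pullback $u_0^*$ is injective on holomorphic quadratic differentials by the Riemann removable singularity theorem applied off the branch set, and the conclusion $h_1=h_2$ is exactly Sampson's injectivity theorem on $S$ itself. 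If you want to salvage your approach you would need to develop the harmonic-map theory for the cone metrics $u_0^*h_i$ and carry out the analytic continuation of $\wt F$ in detail; the paper's reduction renders all of that unnecessary.
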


This article is organized as follows. In section \ref{sec1}, we consider the uniqueness of critical points of energy function for covering maps and prove Theorem \ref{thm1}. Then we show that the uniqueness fails for non-simple branched coverings and prove Proposition \ref{prop2}. In section \ref{sec2}, we consider the injectivity of Hopf differential and prove Theorem \ref{thm3}, Proposition \ref{prop3}.

\section{Uniqueness of critical points of energy function}\label{sec1}

Let $u_0:\Sigma\to S$ be a smooth and surjective map. Fix a hyperbolic metric $h=\rho^2|dv|^2$ on $S$, then for any hyperbolic metric $g=\lambda^2|dz|^2$ on $\Sigma$ and any smooth map $u:\Sigma\to S$, the energy of $u$ is given by
\[E(u):=\int_{\Sigma} \rho^{2}(u(z))\left(\left|u_{z}^{v}\right|^{2}+\left|u_{\bar{z}}^{v}\right|^{2}\right) \frac{i}{2} d z \wedge d \bar{z},\]
where $u=(u^v,u^{\b{v}})$ with respect to the local  coordinates $\{v,\b{v}\}$ on $S$, and \(u_{z}^{v}:=\partial_{z} u^{v}\) and \(u_{\bar{z}}^{v}:=\partial_{\bar{z}} u^{v}\). 
Then $u$ is harmonic if it satisfies the following Euler-Lagrange equation
\begin{align}\label{EL}
\p_{\b{z}}u^v_z+\p_v\log\rho^2 u^v_z u^v_{\b{z}}=0.	
\end{align}
For each conformal class on $\Sigma$, there is a unique harmonic map $u_g$ which is homotopic to $u_0$, see \cite{ES, Hart}. Thus, we obtain an energy function 
$E(t), t\in\mc{T}(\Sigma)$ on the Teichm\"uller space of $\Sigma$, see \cite[Page 66-67]{Tromba} and also \cite[Section 1.1]{KWZ}.
For any surjective map $u:\Sigma\to S$, one has
\begin{aligns}\label{min}
E(u) &=\int_{\Sigma}\rho^2(|u^v_z|^2+|u^v_{\b{z}}|^2)\frac{i}{2}dz\wedge d\b{z}\\
&\geq \left|\int_{\Sigma}\rho^2(|u^v_z|^2-|u^v_{\b{z}}|^2)\frac{i}{2}dz\wedge d\b{z}\right|	\quad \text{with equality iff $u$ is $\pm$ holomorphic}\\
&=\left|\int_{\Sigma}u^*(\rho^2\frac{i}{2}dv\wedge d\b{v})\right|=|\deg u|\text{Area}(S).
\end{aligns}
Hence if $u$ is $\pm$holomorphic, then the energy attains the minimum, hence it is a critical point in the Teichm\"uller space.
 Firstly, we give the definition of covering maps, see e.g. \cite[Definition 1.3.2]{Jost}.
\begin{defn}[Covering map]
	 A local homeomorphism \(u: \Sigma \rightarrow S\) is called a covering
if each \(q \in S\) has a (connected) neighbourhood \(V\) such that every connected
component of \(u^{-1}(V)\) is mapped by \(u\) homeomorphically onto \(V\). 
\end{defn}
 Now we give a direct proof of \cite[Corollary 0.2]{KWZ}.
\begin{thm}\label{unique}
	If $u_0:\Sigma\to S$ is a smooth covering map, then the associated energy function has only one critical point.
\end{thm}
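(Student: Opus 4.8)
The plan is to realize the strategy sketched in the introduction: first identify the critical points of $E$ on $\mc{T}(\Sigma)$ with the conformal structures for which the harmonic map is conformal, and then use uniqueness of harmonic maps (Hartman \cite{Hart}) to force any two such structures to agree in $\mc{T}(\Sigma)$.

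The first step is to recall the first variation of the energy function on Teichm\"uller space (see \cite{KWZ, Tromba}): its differential at $t$ is given by pairing the Hopf differential $(u_t^*h)^{2,0}$ against Beltrami differentials, the contribution of the variation of $u_t$ itself vanishing because $u_t$ is harmonic. Since this pairing is nondegenerate, $t$ is critical if and only if $(u_t^*h)^{2,0}=0$, i.e. if and only if $u_t$ is conformal. As $u_t$ is homotopic to the covering $u_0$ it has the same nonzero degree $d$, so being conformal it is $(\pm)$holomorphic, and Riemann--Hurwitz together with $\chi(\Sigma)=d\,\chi(S)$ leaves no room for branch points. Hence at a critical point $u_t$ is an unbranched holomorphic covering and a local isometry $u_t:(\Sigma,u_t^*h)\to(S,h)$, with $[u_t^*h]=t$ in $\mc{T}(\Sigma)$.

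Next I would take two critical points $t_1,t_2$, with associated local isometries $u_i:(\Sigma,u_i^*h)\to(S,h)$, and let $f:(\Sigma,u_1^*h)\to(\Sigma,u_2^*h)$ be the unique harmonic map homotopic to $\op{id}_\Sigma$ (Eells--Sampson \cite{ES}, Hartman \cite{Hart}). Because $u_2$ is a local isometry it is totally geodesic, so $u_2\circ f$ is again harmonic, and it is homotopic to $u_2\circ\op{id}_\Sigma=u_2\simeq u_0$. Since $u_1:(\Sigma,u_1^*h)\to(S,h)$ is another harmonic map in the homotopy class of $u_0$ (both surjective, so neither maps to a point or a closed geodesic), Hartman's uniqueness gives $u_1=u_2\circ f$. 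Locally $f=u_2^{-1}\circ u_1$ is then a composition of local isometries, hence itself a local isometry; being harmonic and homotopic to the identity it is a diffeomorphism by Schoen--Yau \cite{SY0} and Sampson \cite{Sampson}, and therefore a global isometry homotopic to $\op{id}_\Sigma$. This forces $[u_1^*h]=[u_2^*h]$, i.e. $t_1=t_2$, so $E$ has a single critical point.

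The step I expect to be most delicate is the characterization of critical points via the first variation and, in particular, the verification that the resulting conformal harmonic map is a genuine unbranched covering and local isometry rather than merely a branched holomorphic map; this is where the hypothesis that $u_0$ is a covering, through the Riemann--Hurwitz identity $\chi(\Sigma)=d\,\chi(S)$, is essential. The remaining inputs---that $u_2\circ f$ is harmonic (totally geodesic composition) and that Hartman's theorem applies (the maps being surjective of nonzero degree)---are comparatively routine.
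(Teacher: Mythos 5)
Your proof is correct and follows essentially the same route as the paper's: identify critical points with conformal classes where the harmonic map is $\pm$-holomorphic (hence an unbranched covering and a local isometry $u_t:(\Sigma,u_t^*h)\to(S,h)$ with $t=[u_t^*h]$), then post-compose the harmonic map homotopic to the identity with this local isometry and invoke Hartman's uniqueness to conclude $[u_1^*h]=[u_2^*h]$ in $\mc{T}(\Sigma)$. Your Riemann--Hurwitz argument ruling out branch points is a worthwhile detail that the paper leaves implicit in the assertion that $u_t$ is again a covering.
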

\begin{proof}
Since $u_0$ is a covering map, so the pullback metric $u^*_0h$ is a Riemannian metric on $\Sigma$. Denote by $[\bullet]$ denotes the conformal class in Teichm\"uller space $\mc{T}(\Sigma)=\mc{M}(\Sigma)/\op{Diff}_0(\Sigma)$.  Note that $u_0:(\Sigma,u_0^*h)\to (S,h)$ is a local isometry, so $u_0$ is $\pm$-holomorphic between $(\Sigma,[u_0^*h])$ and $(S,[h])$, see e.g. \cite[Section 1.4]{Wood}, which implies that $[u_0^*h]\in \mc{T}(\Sigma)$ is a critical point of the energy function associated to the fixed $u_0$ due to Equation (\ref{min}), see also  \cite{SU,SU2}, and \cite[Proposition 2.2]{KWZ}.

Now we assume that there exists another critical point $t\in\mc{T}(\Sigma)$, and denote by $u_{t}$ the associated harmonic map corresponding to $t$,  homotopic to $u_0$, then  $u_t: (\Sigma,t)\to (S,[h])$ is $\pm$-holomorphic. Since $u_0$ is a covering map, so is $u_t$. Thus $u_t^*h$ is a hyperbolic metric on $\Sigma$, and using $u_t=u_t\circ \op{Id}$ and the fact that both $u_t:(\Sigma,t)\to (S, [h])$ and $u_t:(\Sigma, [u_t^*h])\to (S,[h])$ are $\pm$-holomorphic,
\[
\xymatrix{
(\Sigma,t)\ar[rr]^{\op{Id}}\ar[rd]_{u_t}&&(\Sigma,[u_t^*h])\ar[ld]^{u_t}\\
&(S,[h])
}
\]
we get that the identity map \text{Id} between the Riemann surfaces $(\Sigma,t)$ and $(\Sigma, [u_t^*h])$ is biholomorphic, 
 and so $t=[u^*_th]$. Consider the following diagram:
 \[
\xymatrix{
(\Sigma,u_0^*h)\ar[rr]^{f}\ar[rd]_{u_0}&&(\Sigma,u_t^*h)\ar[ld]^{u_t}\\
&(S,h)
}
\]
Denote by $f$ the unique harmonic map between $(\Sigma,u_0^*h)$ and $(\Sigma,u_t^*h)$, and is homotopic to identity. Since $u_t$ is a local isometry,  $u_t\circ f$ is a harmonic map and homotopic to $u_0$, which follows that $u_0=u_t\circ f$ by the uniqueness of harmonic map \cite{Hart}. So $u_0^*h=f^*u_t^*h$, which means that $f$ is a conformal map and homotopic to identity. Thus $[u_0^*h]=[u_t^*h]=t\in\mc{T}(\Sigma)$, which completes the proof. 
\end{proof}

Next we consider a fixed branched covering map $u_0$. One can refer \cite{BE, BM} for the details on branched coverings. 

\begin{defn}[Branched covering]
A smooth and surjective map $u: \Sigma \to S$ is called a {\it branched covering} if there exists a finite set, called {\it branch set},  $B\subset S$, such that $u: \Sigma \backslash u^{-1}(B) \to S\backslash B$ is a covering map, and for each point $p$ in $u^{-1}(B)$, called a {\it singular point}, the map $u$ about $p$ is locally equivalent to the map $z\mapsto z^k (z\in\mb{C})$ about $0\in\mb{C}$ for some positive integer $k\geq 2$. Here $k$ is called a local degree, denoted by
deg($u, p$).
\end{defn}
In \cite{BE}, one can find the following definitions of $b$-homotopic and simple branched covering.  
\begin{defn}[$b$-homotopic]
	Two branched coverings \(u_{0}, u_{1}: \Sigma \rightarrow S\) 
are {\it \(b\)-homotopic} if there is a homotopy \(\theta_{t}: \Sigma \rightarrow S, 0 \leqslant t \leqslant 1,\) such that
\(\theta_{0}=u_{0}, \,\,\theta_{1}=u_{1},\) and each \(\theta_{t}\) is a branched covering.
\end{defn}
\begin{defn}[Simple branched covering]\label{sbc}
	A branched covering \(u: \Sigma \rightarrow S\) of absolute degree (maximum cardinality of a fiber) \(n \geqslant 2\) is {\it simple} provided that for each \(y \in S\) the fiber
\(u^{-1}(y)\) over \(y\) consists of at least \(n-1\) points (and hence contains at most
one singular point  of local degree $2$).
\end{defn}
It turns out that the simple branched coverings are generic among branched coverings. That is, they form an open and dense set in the space of all branched coverings between two given Riemann surfaces. 
\begin{prop}[{\cite[Proposition 3]{BE}}]\label{prop-b}
Any branched covering \(u: \Sigma \rightarrow S\) is $b$-homotopic to a
simple branched covering by an arbitrarily small homotopy.	
\end{prop}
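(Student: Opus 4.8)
The plan is to prove the statement by a finite sequence of local modifications, each supported in a small coordinate disk and each running through branched coverings, so that the resulting total homotopy is as small as we please. There are exactly two ways in which a branched covering $u:\Sigma\to S$ of absolute degree $n$ can fail to be simple: a singular point $p$ may have local degree $k=\deg(u,p)\geq 3$, or two distinct singular points may lie in one fibre $u^{-1}(y)$. Both defects are confined to finitely many points, so it suffices to exhibit, for each defect, an arbitrarily small $b$-homotopy that removes it while leaving $u$ unchanged outside a prescribed small neighbourhood; concatenating finitely many of these yields the proposition.

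First I would treat a singular point $p$ of local degree $k\geq 3$. Choosing smooth charts in which $u$ is the model $z\mapsto z^{k}$ on a disk $D$, I would consider the polynomial deformation $P_{t}$ determined by $P_{t}(0)=0$ and
\[
P_{t}'(z)=k\,(z-tb_{1})(z-tb_{2})\cdots(z-tb_{k-1}),
\]
where $b_{1},\dots,b_{k-1}$ are fixed distinct numbers. Then $P_{0}=z^{k}$, and for $t\neq 0$ the map $P_{t}$ has $k-1$ distinct simple critical points $tb_{j}$. The rescaling $z=tw$ yields $P_{t}(tb_{j})=t^{k}Q(b_{j})$ for a fixed polynomial $Q$ independent of $t$, so a generic choice of the $b_{j}$ makes the critical values distinct for all small $t\neq 0$. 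Writing $P_{t}(z)=z^{k}+t\,\psi_{t}(z)$, I would glue $P_{t}$ to the unchanged map by a radial cutoff, setting $u_{t}(z)=z^{k}+t\,\chi(|z|)\psi_{t}(z)$ with $\chi\equiv1$ near $0$ and $\chi\equiv 0$ near $\partial D$. On the transition annulus the holomorphic derivative is close to $kz^{k-1}$, which is bounded away from $0$ there, while the antiholomorphic part coming from $\chi$ is $O(t)$; hence for $t$ small the real Jacobian of $u_{t}$ is positive and $u_{t}$ has no critical points in the annulus. Thus every $u_{t}$ is a branched covering, and $u_{1}$ replaces the single order-$k$ branch point by $k-1$ simple ones with distinct branch values. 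Applying this at each singular point of local degree $\geq3$ makes all singular points simple.

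Next I would separate two simple singular points $p_{1},p_{2}$ lying over a common $y\in S$. Working in a chart where $u$ is $z\mapsto z^{2}$ near $p_{2}$ and lands at $y=0$, I set $u_{t}(z)=z^{2}+tc\,\chi(|z|)$ with $c\neq0$ chosen so that the displaced branch value $c$ avoids the finitely many other branch values of $u$. The same cutoff and Jacobian estimate as above show that each $u_{t}$ is a branched covering, unchanged outside the chosen disk, and that $u_{1}$ moves the branch value of $p_{2}$ off $0$, so that $p_{1}$ and $p_{2}$ now lie in distinct fibres. Iterating over the finitely many multiply-covered fibres disjoins all singular points.

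Each of these elementary homotopies is supported in an arbitrarily small disk, runs through branched coverings, and is made arbitrarily small by taking the parameters $t,c$ small; performing them in succession produces an arbitrarily small $b$-homotopy from $u$ to a simple branched covering. The main point to verify carefully is that the interpolated maps genuinely remain branched coverings along the entire homotopy—that the cutoff introduces no spurious critical points and preserves orientation—which reduces to the uniform estimate $|\partial u_{t}|>|\bar\partial u_{t}|$ on the transition annulus; securing smooth local models in which $u$ is exactly $z\mapsto z^{k}$ is the only other ingredient, and it follows from the definition of a branched covering.
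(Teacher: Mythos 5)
The paper offers no proof of this statement to compare against: it is quoted directly from Berstein--Edmonds \cite{BE}, with a citation in place of an argument. Your proposal in effect reconstructs the proof behind that citation, and it is the standard one: split each branch point of local degree $k\geq 3$ into $k-1$ simple ones by deforming the local model $z\mapsto z^k$ rel boundary to a generic polynomial, then separate simple singular points sharing a fiber by small displacements of one branch value. The structural claim you start from is correct, since a fiber over $y$ contains exactly $n-\sum_{p\in u^{-1}(y)}(\deg(u,p)-1)$ points, so simplicity fails precisely through the two defects you list. The rescaling computation $P_t(tb_j)=t^kQ(b_j)$ with $Q$ independent of $t$ does give distinct critical values for generic $b_j$ uniformly in small $t\neq 0$, and the cutoff estimate $|\partial u_t|>|\bar\partial u_t|$ on the transition annulus correctly rules out spurious critical points, so each interpolated map is a branched covering and the homotopy is a $b$-homotopy supported in a small disk.

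Two points deserve tightening. First, your Jacobian estimate is valid only for small $t$, so the endpoint of each elementary homotopy should be $u_{t_0}$ for a fixed small $t_0$, reached by the reparametrized family $s\mapsto u_{st_0}$, $s\in[0,1]$, rather than $u_1$; this is purely cosmetic. Second, the definition of branched covering guarantees only that $u$ is \emph{locally equivalent} to $z\mapsto z^k$, and if that equivalence is read topologically your smooth charts are not automatic; the clean fix is to perform the deformation on the model map $z\mapsto z^k$ itself, rel boundary of the model disk (where all your polynomial and cutoff estimates are exact), and then transport the family through the given charts, so that the conjugated maps form a homotopy through branched coverings supported in the chosen disk, with smallness obtained by shrinking the disk (and smoothness restored by a standard approximation if the ambient category demands it). With these adjustments your argument is complete and is essentially the argument of \cite{BE}.
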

In general, if $u_0$ is a branched covering, then the critical points of the associated energy function need not be unique. More precisely,
\begin{prop}
If $u_0$ is a smooth non-simple branched covering, then the associated energy function has at least two critical points. 
\end{prop}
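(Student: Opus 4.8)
The plan is to produce two distinct points of $\mc{T}(\Sigma)$ at each of which the harmonic map homotopic to $u_0$ is $\pm$-holomorphic; by Equation (\ref{min}) any such point realizes the global minimum $|\deg u_0|\text{Area}(S)$ of the energy, and is therefore a critical point of the energy function.

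First I would take $t_0:=[u_0^*h]$, the complex structure on $\Sigma$ for which the branched covering $u_0$ is holomorphic. Although $u_0^*h$ degenerates at the singular points, the local model $z\mapsto z^k$ is holomorphic, so it pulls back the complex structure of $(S,[h])$ to a genuine smooth complex structure on $\Sigma$; thus $t_0$ is a well-defined point of $\mc{T}(\Sigma)$ and $u_0:(\Sigma,t_0)\to(S,[h])$ is holomorphic, hence harmonic and energy-minimizing. By Equation (\ref{min}), $t_0$ is a critical point. Next, using Proposition \ref{prop-b} I would fix a simple branched covering $u_1$ that is $b$-homotopic, and in particular homotopic, to $u_0$, and set $t_1:=[u_1^*h]$. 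As before $u_1:(\Sigma,t_1)\to(S,[h])$ is holomorphic and homotopic to $u_0$, so by the uniqueness of harmonic maps \cite{Hart} it coincides with the harmonic map in the class $[u_0]$ over the point $t_1$; being $\pm$-holomorphic, $t_1$ is again a critical point.

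The heart of the matter is to show $t_0\neq t_1$. Suppose to the contrary that $t_0=t_1$ in $\mc{T}(\Sigma)$. Then there is $\phi\in\op{Diff}_0(\Sigma)$ inducing a biholomorphism $(\Sigma,t_0)\to(\Sigma,t_1)$, so that $u_1\circ\phi:(\Sigma,t_0)\to(S,[h])$ is holomorphic and, since $\phi$ is homotopic to the identity, homotopic to $u_0$. Applying the uniqueness of harmonic maps \cite{Hart} to the two holomorphic, hence harmonic, maps $u_0$ and $u_1\circ\phi$ in the class $[u_0]$ yields $u_0=u_1\circ\phi$. But a diffeomorphism $\phi$ carries the fibers, singular points and local degrees of $u_1$ bijectively onto those of $u_0$; consequently $u_0$ satisfies the fiber condition of Definition \ref{sbc} if and only if $u_1$ does, i.e. $u_0$ is simple if and only if $u_1$ is. Since $u_1$ is simple while $u_0$ is not, this is a contradiction. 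Hence $t_0\neq t_1$, and the energy function has at least the two critical points $t_0$ and $t_1$.

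I expect the main obstacle to lie in the step $t_0=t_1\Rightarrow u_0=u_1\circ\phi$, namely verifying that Hartman's uniqueness is genuinely applicable: one must check that the relevant harmonic maps are neither constant nor have image a closed geodesic, which holds here because $u_0$ is a surjective branched covering onto the hyperbolic surface $(S,h)$. A secondary technical point, already flagged above, is to argue carefully that the degenerate pullback metrics $u_0^*h$ and $u_1^*h$ still determine honest points of $\mc{T}(\Sigma)$ through the smooth pulled-back complex structures, so that $t_0$ and $t_1$ are genuinely points of Teichm\"uller space.
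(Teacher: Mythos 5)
Your proof is correct and follows essentially the same route as the paper: both exhibit $[u_0^*h]$ and $[u^*h]$ (for a simple branched covering $u$ that is $b$-homotopic to $u_0$, via Proposition \ref{prop-b}) as energy-minimizing critical points, and both derive $u_0=u\circ f$ from their hypothetical equality in $\mc{T}(\Sigma)$ using Hartman's uniqueness. The only divergence is the final contradiction: the paper uses multiplicativity of local degrees at a singular point with $\deg(u_0,p)>2$, whereas you observe that a diffeomorphism preserves fiber cardinalities and hence simplicity itself --- which in fact also covers the case where non-simplicity comes from two local-degree-$2$ singular points in a single fiber, so your variant is, if anything, slightly more complete.
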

\begin{proof}
Since $u_0$ is a branched covering, then $u_0:(\Sigma,[u_0^*h])\to(S,h)$ is $\pm$-holomorphic, and $[u_0^*h]\in\mc{T}(\Sigma)$ is a critical point of the associated energy function $E(t)$, $t\in\mc{T}(\Sigma)$ due to Equation (\ref{min}). 
From Proposition \ref{prop-b}, $u_0$ is $b$-homotopic to a simple branched covering $u:\Sigma\to S$. So $[u^*h]\in\mc{T}(\Sigma)$ is also a critical point of the energy function. 

We claim $[u^*h]\neq [u_0^*h]$. In fact, if $[u^*h]= [u_0^*h]$, then there exists a biholomorphic mapping $f:(\Sigma,[u_0^*h])\to (\Sigma,[u^*h])$ and is homotopic to identity,
 \[
\xymatrix{
(\Sigma,[u_0^*h])\ar[rr]^{f}\ar[rd]_{u_0}&&(\Sigma,[u^*h])\ar[ld]^{u}\\
&(S,h)
}
\]
This implies that $u\circ f$ is $\pm$-holomorphic and homotopic to $u_0$. By uniqueness of harmonic map \cite{Hart}, $u_0=u\circ f$. On the other hand, $u_0$ is a non-simple branched covering while $u$ is simple, so there exists a singular point $p$ of $u_0$ such that the local degree satisfies $\deg(u_0,p)> 2$. Then
\begin{equation*}
  2< \deg(u_0,p)=\deg(u,f(p))\cdot\deg(f,p)=\deg(u,f(p))\leq 2,
\end{equation*}
where the first equality follows from \cite[Lemma A.17]{BM}, a
contradiction. Thus $[u^*h]\neq [u_0^*h]$ and the proof is complete.
\end{proof}

\section{Injectivity of Hopf differential}\label{sec2}
Let $u_0:\Sigma\to S$ be a smooth and surjective map with a fixed  hyperbolic metric $g=\lambda^2|dz|^2$ on $\Sigma$. For any hyperbolic metric $h$ on $S$, there exists a unique harmonic map 
$$u_h:(\Sigma,g)\to (S,h=\rho^2|dv|^2)$$
which is homotopic to $u_0$, and  the {\it Hopf differential} $(u_h^*h)^{2,0}$ is holomorphic. Thus, one obtains the following map:  
\begin{align}
\Phi:	\mc{T}(S)\to \operatorname{QD}(\Sigma,g),\quad \Phi(h)=(u_h^*h)^{2,0},
\end{align}
where $\operatorname{QD}(\Sigma,g)$ denotes the space of holomorphic quadratic differentials on $(\Sigma,g)$. 

Now we assume $u_0$ is a covering map, then there exists a unique harmonic map $$f_h: (\Sigma, g)\to (\Sigma, u_0^*h),$$
which is homotopic to identity and satisfies $u_h=u_0\circ f_h$. For the harmonic map $u_h:\Sigma\to S$, there are two natural smooth functions on $\Sigma$ associated $u_h$,
$$H(z)=\frac{\rho^2(u_h(z))}{\lambda^2(z)}|(u_h)^v_z|^2=\frac{(\rho^2\circ u_0|(u_0)^v_w|^2)( f_h(z))}{\lambda^2(z)}|(f_h)^w_{z}|^2$$
and 
$$L(z)=\frac{\rho^2(u_h(z))}{\lambda^2(z)}|(u_h)^v_{\b{z}}|^2=\frac{(\rho^2\circ u_0 |(u_0)^v_w|^2)( f_h(z))}{\lambda^2(z)}|(f_h)^w_{\b{z}}|^2,$$
which are also  smooth functions associated to $f_h$. Here, without loss of generality, we assume that $u_0:(\Sigma, u_0^*h)\to (S,h)$ is holomorphic.
 The Jacobian, energy density and pullback metric of $u_h$ (or $f_h$) are given by
\begin{align}\label{3.1}
J_h:=H-L,\quad e_h=H+L,\quad f_h^*u_0^*h=\Phi(h)+\o{\Phi(h)}+\lambda^2 e_h.
\end{align}
Moreover, one has 
\begin{align*}\Delta \log H=2 H-2 L-2,\quad \text{where}\, H>0,\end{align*}
where $\Delta:=\frac{4}{\lambda^2}\frac{\p^2}{\p z\p\b{z}}$, and 
\begin{align}\label{3.2}|\Phi(h)|^2=\frac{\rho^4}{\lambda^4}|(u_h)^v_z(u_h)^v_{\b{z}}|^2=HL,\end{align}
see  \cite{Sampson, SY0, Wolf0} for related identities. 
Since $f_h$ is a harmonic map and is homotopic to identity, so $f_h$ is a  diffeomorphism  and $H> 0$ on  $\Sigma$, see \cite{Sampson, SY0}.
Let $f_{h_1}, f_{h_2}$ be two harmonic maps associated to two hyperbolic metrics $h_1,h_2$ on $S$, respectively. 
\begin{lemma}\label{lemma1}
If $\Phi(h_1)=\Phi(h_2)$, then $(f_{h_2}\circ f_{h_1}^{-1})^*u_0^*h_2=u_0^*h_1$.
\end{lemma}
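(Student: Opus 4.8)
The plan is to deduce from $\Phi(h_1)=\Phi(h_2)$ that the two harmonic maps share the same holomorphic and anti-holomorphic energy densities, and then to reassemble the pullback metrics. For $i=1,2$ write $H_i,L_i,e_{h_i}$ for the functions $H,L,e_h$ attached to $u_{h_i}$ (equivalently to $f_{h_i}$). Since $\Phi(h_1)=\Phi(h_2)$ as holomorphic quadratic differentials on $(\Sigma,g)$, their pointwise norms agree, so by \eqref{3.2} the function $|\Phi|^2:=|\Phi(h_1)|^2=|\Phi(h_2)|^2=H_1L_1=H_2L_2$ is the same for both indices. Because $H_i>0$ everywhere, I can eliminate $L_i=|\Phi|^2/H_i$ and rewrite the Bochner identity $\Delta\log H_i=2H_i-2L_i-2$ purely in terms of $\log H_i$ and the fixed function $|\Phi|^2$.

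The heart of the argument, and the step I expect to be the main obstacle, is a maximum principle on the closed surface $\Sigma$ applied to $W:=\log(H_1/H_2)$. Subtracting the two rewritten Bochner identities gives
\begin{align*}
\Delta W=2(H_1-H_2)\left(1+\frac{|\Phi|^2}{H_1H_2}\right)=2H_2\bigl(e^{W}-1\bigr)\left(1+\frac{|\Phi|^2}{H_1H_2}\right),
\end{align*}
where I used $H_1=H_2e^{W}$. The coefficient $2H_2\bigl(1+|\Phi|^2/(H_1H_2)\bigr)$ is strictly positive, so $\Delta W$ has the same sign as $e^{W}-1$, i.e.\ the same sign as $W$. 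Evaluating at a global maximum of $W$ (where $\Delta W\le 0$) forces $W\le 0$ there and hence everywhere, while evaluating at a global minimum forces $W\ge 0$; therefore $W\equiv 0$, that is $H_1=H_2$, and consequently $L_1=|\Phi|^2/H_1=|\Phi|^2/H_2=L_2$. The only care needed is that $L_i$ may vanish at the (isolated) zeros of $\Phi$, but since $H_i>0$ throughout, the quotient $|\Phi|^2/H_i$ and the whole computation remain smooth there.

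With $H_1=H_2$ and $L_1=L_2$ in hand, the energy densities coincide, $e_{h_1}=e_{h_2}$, and together with $\Phi(h_1)=\Phi(h_2)$ the pullback metric formula \eqref{3.1} yields $f_{h_1}^*u_0^*h_1=f_{h_2}^*u_0^*h_2$ as metrics on $(\Sigma,g)$. Finally, since $f_{h_1}$ and $f_{h_2}$ are diffeomorphisms, pulling back by $f_{h_1}^{-1}$ and using contravariance of pullback gives
\begin{align*}
(f_{h_2}\circ f_{h_1}^{-1})^*u_0^*h_2=(f_{h_1}^{-1})^*f_{h_2}^*u_0^*h_2=(f_{h_1}^{-1})^*f_{h_1}^*u_0^*h_1=u_0^*h_1,
\end{align*}
which is the claimed identity. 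The substantive input is thus the sign-definite maximum principle for $W$; the remainder is bookkeeping with the diffeomorphisms $f_{h_i}$.
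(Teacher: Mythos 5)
Your proposal is correct and follows essentially the same route as the paper: the identity $|\Phi|^2=H_1L_1=H_2L_2$, the Bochner formula for $\log H_i$, and a maximum-principle argument on $\log(H_1/H_2)$ forcing $H_1=H_2$, after which \eqref{3.1} and \eqref{3.2} give the equality of pullback metrics. The paper phrases the maximum principle as a contradiction at the maximum of $H_1/H_2$ rather than evaluating at both extrema of $W$, but this is the same argument.
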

\begin{proof}
	We will follow the proof of \cite[Theorem 12]{Sampson}, see also the proof of \cite[Theorem 3.1]{Wolf0}.  Denote by $H_i$, $i=1,2$ the two smooth functions associated to the harmonic maps $u_{h_i}=u_0\circ f_{h_i}$ ,$i=1,2$. If $\Phi(h_1)=\Phi(h_2)=\Phi$, we  claim that $H_1=H_2$.  If $H_1>H_2$ at some point in $\Sigma$, then at the maximum point $p$ of $\frac{H_1}{H_2}$, one has
	$$0\geq (\Delta\log\frac{H_1}{H_2})(p)=2(H_1-H_2)(p)-2|\Phi|^2(\frac{1}{H_1}-\frac{1}{H_2})(p)>0,$$	
	a contradiction, so $H_1\leq H_2$. Similarly, one has $H_2\leq H_1$. Thus $H_1=H_2$. From \eqref{3.1} and \eqref{3.2}, we obtain $f_{h_1}^*u_0^*h_1=f_{h_2}^*u_0^*h_2$, which completes the proof.
\end{proof}

Now we consider the following diagram
\begin{equation*}
\begin{CD}
(\Sigma,g) @>f_{h_1}>> (\Sigma,u^*_0h_1)@>u_0>>(S,h_1)\\
@V\op{Id}VV @Vf_{h_2}\circ f_{h_1}^{-1} VV @V F_1 VV\\
(\Sigma, g) @>f_{h_2}>> (\Sigma,u^*_0h_2)@>u_0>> (S,h_2)
\end{CD} 
\end{equation*}
\begin{lemma}\label{prop11}
If $f_{h_2}\circ f_{h_1}^{-1}$ is an isometry and homotopic to identity, then
$f_{h_2}\circ f_{h_1}^{-1}$ preserves the fibers of $u_0$, i.e. $u_0\circ f_{h_2}\circ f_{h_1}^{-1}(p)=u_0\circ f_{h_2}\circ f_{h_1}^{-1}(q)$  if $u_0(p)=u_0(q)$. Thus $$F=u_0\circ (f_{h_2}\circ f_{h_1}^{-1})\circ u_0^{-1}:(S,h_1)\to (S,h_2)$$	
is well-defined and homotopic to identity.
\end{lemma}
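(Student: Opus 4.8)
The plan is to pass to the universal cover and reduce the fiber-preservation claim to a statement about Fuchsian representations. Write $\Gamma=\pi_1(S)$ and let $\Lambda=\pi_1(\Sigma)$, which $u_0$ realizes as a finite-index subgroup of $\Gamma$. The hyperbolic metrics $h_1,h_2$ determine discrete faithful representations $\rho_1,\rho_2:\Gamma\to\mathrm{PSL}(2,\mathbb{R})$ with $(S,h_i)=\mathbb{H}/\rho_i(\Gamma)$, and since $u_0$ is a covering and $u_0^*h_i$ is the pulled-back hyperbolic metric, $(\Sigma,u_0^*h_i)=\mathbb{H}/\rho_i(\Lambda)$, with $u_0$ itself the natural projection $\mathbb{H}/\rho_i(\Lambda)\to\mathbb{H}/\rho_i(\Gamma)$. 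In this language two points $p,q\in\Sigma$ lie in the same fiber of $u_0$ exactly when chosen lifts $\tilde p,\tilde q\in\mathbb{H}$ satisfy $\tilde q=\rho_1(\gamma)\tilde p$ for some $\gamma\in\Gamma$.

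Next I would lift the isometry $\psi:=f_{h_2}\circ f_{h_1}^{-1}:(\Sigma,u_0^*h_1)\to(\Sigma,u_0^*h_2)$ to an isometry $A:\mathbb{H}\to\mathbb{H}$; since $\psi$ is orientation-preserving (being homotopic to the identity) we have $A\in\mathrm{PSL}(2,\mathbb{R})$. The hypothesis that $\psi$ is homotopic to the identity means $\psi_*=\mathrm{id}$ on $\Lambda$ up to an inner automorphism, and after adjusting $A$ by a deck transformation we obtain
\[
A\,\rho_1(c)\,A^{-1}=\rho_2(c)\qquad\text{for all }c\in\Lambda .
\]
If I can upgrade this to hold for all $\gamma\in\Gamma$, then for any such $\gamma$ and any lift $\tilde p$ I get $A\rho_1(\gamma)\tilde p=\rho_2(\gamma)A\tilde p$, so $A$ carries $\rho_1(\Gamma)$-orbits to $\rho_2(\Gamma)$-orbits; this is precisely fiber-preservation, and the descended map is $F$. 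The well-definedness of $F$, the fact that it is an isometry $(S,h_1)\to(S,h_2)$, and that $F_*=\mathrm{id}$ (hence $F$ is homotopic to the identity, surfaces being aspherical) all follow at once from $A\rho_1(\gamma)A^{-1}=\rho_2(\gamma)$ on all of $\Gamma$.

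The heart of the argument, and the step I expect to be the main obstacle, is therefore extending the equivariance from the finite-index subgroup $\Lambda$ to all of $\Gamma$. Here I would use a commutator trick. Fix $\gamma\in\Gamma$ and set $K=\Lambda\cap\gamma^{-1}\Lambda\gamma$, a finite-index and hence non-elementary subgroup. For $c\in K$ both $c$ and $\gamma c\gamma^{-1}$ lie in $\Lambda$, so applying the known equivariance to each and substituting $\rho_2(c)=A\rho_1(c)A^{-1}$ gives
\[
\bigl(A\rho_1(\gamma)\bigr)\,\rho_1(c)\,\bigl(A\rho_1(\gamma)\bigr)^{-1}
=\bigl(\rho_2(\gamma)A\bigr)\,\rho_1(c)\,\bigl(\rho_2(\gamma)A\bigr)^{-1}
\]
for every $c\in K$. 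Thus $\bigl(\rho_2(\gamma)A\bigr)^{-1}\bigl(A\rho_1(\gamma)\bigr)$ centralizes the non-elementary Fuchsian group $\rho_1(K)$, whose centralizer in $\mathrm{PSL}(2,\mathbb{R})$ is trivial; hence $A\rho_1(\gamma)=\rho_2(\gamma)A$, i.e. $A\rho_1(\gamma)A^{-1}=\rho_2(\gamma)$. The only points needing care are that $K$ is genuinely non-elementary (immediate from finite index in a genus $\geq 2$ surface group) and that a non-elementary subgroup of $\mathrm{PSL}(2,\mathbb{R})$ has trivial centralizer (it contains two hyperbolic elements with distinct axes, and any centralizing element must fix both axes, forcing it to be the identity). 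With this extension in hand the lemma follows exactly as described above.
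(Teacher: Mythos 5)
Your proof is correct, but it takes a genuinely different route from the paper's. The paper stays entirely inside harmonic map theory: it takes the unique harmonic map $F_1:(S,h_1)\to(S,h_2)$ homotopic to the identity, applies homotopy lifting to the homotopy from $u_0$ to $F_1\circ u_0$ through the covering $u_0$ to produce a lift $\tilde{F}$ with $u_0\circ\tilde{F}=F_1\circ u_0$, notes that $\tilde{F}$ is harmonic because $u_0$ is a local isometry, and then invokes Hartman's uniqueness of harmonic maps in a homotopy class to identify $\tilde{F}$ with the isometry (hence harmonic map) $f_{h_2}\circ f_{h_1}^{-1}$; fiber preservation and $F=F_1$ then drop out. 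You instead work at the level of Fuchsian groups: lift the isometry to $A\in\mathrm{PSL}(2,\mathbb{R})$, normalize by a deck transformation to get $(\rho_1,\rho_2)$-equivariance over the finite-index subgroup $\Lambda$, and extend to all of $\Gamma$ via the trivial-centralizer trick applied to $K=\Lambda\cap\gamma^{-1}\Lambda\gamma$. Your key supporting points -- that $K$ is finite index hence non-elementary, that a non-elementary Fuchsian group has trivial centralizer in $\mathrm{PSL}(2,\mathbb{R})$, and the bookkeeping with inner automorphisms when normalizing $A$ -- are all sound. The paper's route is shorter given the harmonic-map machinery already in play and identifies $F$ outright with the harmonic map $F_1$; your route avoids Hartman's uniqueness theorem entirely, proves the a priori stronger statement that \emph{any} isometry $(\Sigma,u_0^*h_1)\to(\Sigma,u_0^*h_2)$ homotopic to the identity descends through $u_0$, and exhibits $F$ directly as an isometry, a fact the paper only extracts afterwards (via $F^*h_2=h_1$) in the proof of the injectivity theorem.
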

\begin{proof}
Let $F_1:(S,h_1)\to (S,h_2)$ be the unique harmonic map and  homotopic to the identity $\op{Id}$.  Since $F_1\circ u_0:(\Sigma,u^*_0h_1)\to (S,h_2)$ has a homotopy 
$$F(t)\circ u_0:(\Sigma,u^*_0h_1)\to (S,h_2),\quad F(0)=\op{Id},\quad F(1)=F_1,\quad  t\in [0,1]$$
and $u_0=F(0)\circ u_0$ has a canonical lifting $\op{Id}:(\Sigma, u_0^*h_1)\to (\Sigma,u_0^*h_2)$, by Homotopy lifting, there is a unique homotopy lifting $\tilde{F}(t)$ of $F(t)\circ u_0$, which is homotopic to $\op{Id}$, so 
$
u_0\circ\tilde{F}(t)=F(t)\circ u_0.
$
Now take $t=1$, we have 
\begin{equation}\label{3.3}
  u_0\circ\tilde{F}=F_1\circ u_0,
\end{equation}
 where $\tilde{F}:=\tilde{F}(1):(\Sigma, u_0^*h_1)\to (\Sigma,u_0^*h_2)$. Since $u_0$ is a local isometry,  $\tilde{F}$ is also a harmonic map. On the other hand, $f_{h_2}\circ f_{h_1}^{-1}$ is an isometry and homotopic to identity, so $f_{h_2}\circ f_{h_1}^{-1}$ is also a harmonic map between $(\Sigma, u_0^*h_1)$ and $(\Sigma, u_0^*h_2)$ and is homotopic to identity. By the uniqueness of harmonic maps, one has
$\tilde{F}=f_{h_2}\circ f_{h_1}^{-1}$. Combining with \eqref{3.3} we obtain
$u_0\circ f_{h_2}\circ f_{h_1}^{-1}(p)=u_0\circ f_{h_2}\circ f_{h_1}^{-1}(q)$  if $u_0(p)=u_0(q)$. Thus $F=u_0\circ (f_{h_2}\circ f_{h_1}^{-1})\circ u_0^{-1}$
is well-defined, and so $F=F_1$.
\end{proof}

\begin{thm}
If $u_0:\Sigma\to S$ is a covering map, then the associated Hopf differential 	
\begin{align*}
	\Phi: \mc{T}(S)\to \op{QD}(\Sigma,g),\quad h \mapsto (u^*_hh)^{2,0}
\end{align*}
is injective.
\end{thm}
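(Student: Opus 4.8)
The plan is to assume $\Phi(h_1)=\Phi(h_2)$ for two hyperbolic metrics $h_1,h_2$ on $S$ and to deduce that $h_1=h_2$ as points of $\mc{T}(S)$; the whole argument is an assembly of Lemma \ref{lemma1} and Lemma \ref{prop11}, together with the rigidity of conformal self-maps of a hyperbolic surface homotopic to the identity.

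First I would feed the hypothesis $\Phi(h_1)=\Phi(h_2)$ into Lemma \ref{lemma1} to obtain $(f_{h_2}\circ f_{h_1}^{-1})^*u_0^*h_2=u_0^*h_1$. This identity says precisely that $\psi:=f_{h_2}\circ f_{h_1}^{-1}:(\Sigma,u_0^*h_1)\to(\Sigma,u_0^*h_2)$ is an isometry. Moreover, since $f_{h_1}$ and $f_{h_2}$ are each homotopic to the identity, so is $\psi$; hence the hypotheses of Lemma \ref{prop11} are met.

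Next I would apply Lemma \ref{prop11} to $\psi$. It yields that $\psi$ preserves the fibers of $u_0$, so that
$$F=u_0\circ\psi\circ u_0^{-1}:(S,h_1)\to(S,h_2)$$
is a well-defined map homotopic to the identity. It remains to identify the geometric nature of $F$. Since $u_0:(\Sigma,u_0^*h_i)\to(S,h_i)$ is a local isometry for $i=1,2$ and $\psi$ is an isometry, the composition $F$ is a local isometry; being homotopic to the identity on the compact surface $S$ it is a diffeomorphism, and a bijective local isometry is a global isometry, hence in particular a conformal diffeomorphism from $(S,h_1)$ to $(S,h_2)$ homotopic to the identity. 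By the definition of the Teichm\"uller space $\mc{T}(S)$ this forces $[h_1]=[h_2]$, i.e. $h_1=h_2$ in $\mc{T}(S)$, which is the desired injectivity.

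The conceptual heart of the argument, and the step I expect to be the main obstacle, is the descent of $\psi=f_{h_2}\circ f_{h_1}^{-1}$ to a genuine self-map of $S$: a priori $\psi$ is only defined upstairs on $\Sigma$, and there is no reason for it to respect the identification of points lying in a common $u_0$-fiber. This is exactly the content of Lemma \ref{prop11}, which is established by a homotopy-lifting argument comparing $\psi$ with the canonical lift of the harmonic map $F_1:(S,h_1)\to(S,h_2)$ and invoking uniqueness of harmonic maps. Once that descent is in hand, the conclusion is a short verification, since the covering map $u_0$ transports both the isometry property and the homotopy class from $\Sigma$ down to $S$.
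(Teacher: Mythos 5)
Your proposal is correct and follows essentially the same route as the paper: Lemma \ref{lemma1} gives that $f_{h_2}\circ f_{h_1}^{-1}$ is an isometry, Lemma \ref{prop11} gives the descent to a well-defined $F$ on $S$, and one concludes that $F$ is an isometry homotopic to the identity. The only cosmetic difference is that the paper checks invertibility of $F$ by applying the fiber-preservation argument symmetrically to $f_{h_1}\circ f_{h_2}^{-1}$ and then computes $F^*h_2=h_1$ directly, whereas you deduce that the local isometry $F$ is a global isometry from its being homotopic to the identity; both are fine.
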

\begin{proof}
From Lemma \ref{lemma1} if $\Phi(h_1)=\Phi(h_2)$, then $f_{h_2}\circ f_{h_1}^{-1}$ is an isometry. By Lemma \ref{prop11}, then 	$f_{h_2}\circ f_{h_1}^{-1}$ preserves the fibers, similar is for $f_{h_1}\circ f_{h_2}^{-1}$. Thus $$F=u_0\circ (f_{h_2}\circ f_{h_1}^{-1})\circ u_0^{-1}:(S,h_1)\to (S,h_2)$$
is well-defined and with inverse $F^{-1}=u_0\circ (f_{h_1}\circ f_{h_2}^{-1})\circ u_0^{-1}$. Moreover, 
$$F^*h_2=(u_0\circ (f_{h_2}\circ f_{h_1}^{-1})\circ u_0^{-1})^*h_2=(u_0^{-1})^*(f_{h_2}\circ f_{h_1}^{-1})^*(u_0)^*h_2=(u_0^{-1})^*u_0^*h_1=h_1.$$
Thus $h_1=h_2\in\mc{T}(S)$. 
\end{proof}

We are wondering if the associated Hopf differential map is injective for branched coverings. For this problem we can prove
\begin{prop}
If $u_0:\Sigma\to S$ is a branched covering, then the associated Hopf differential 
	\begin{align*}
	\Phi: \mc{T}(S)\to \op{QD}(\Sigma,[u_0^*h_0]),\quad h \mapsto (u^*_hh)^{2,0}
\end{align*}
is injective for any hyperbolic metric $h_0$ on $S$.
\end{prop}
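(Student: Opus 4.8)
The plan is to reduce the statement to the already-known injectivity of the Hopf differential for the identity map (Sampson \cite{Sampson}), exploiting that the chosen conformal class on $\Sigma$ makes $u_0$ holomorphic. Indeed, with $g=[u_0^*h_0]$ the branched covering $u_0:(\Sigma,[u_0^*h_0])\to (S,[h_0])$ is $\pm$-holomorphic, so precomposition with $u_0$ should convert the harmonic map problem for $[u_0]$ on $\Sigma$ into the harmonic map problem for the identity on $S$. Throughout I normalize orientations so that $u_0$ is holomorphic.

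First I would establish a factorization of the harmonic maps. For each hyperbolic metric $h$ on $S$ let $F_h:(S,h_0)\to (S,h)$ denote the unique harmonic map homotopic to the identity. Since $u_0$ is holomorphic and the harmonic map equation \eqref{EL} for maps out of a surface depends only on the conformal class of the domain, the composition $F_h\circ u_0:(\Sigma,[u_0^*h_0])\to (S,h)$ is harmonic on the complement of the finite branch locus $u_0^{-1}(B)$. As $F_h\circ u_0$ is smooth on all of $\Sigma$ (locally $u_0$ is $z\mapsto z^k$), its tension field is a continuous section vanishing off a finite set, hence vanishing identically, so $F_h\circ u_0$ is harmonic everywhere. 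It is manifestly homotopic to $u_0$, so by the uniqueness of harmonic maps \cite{Hart} one obtains the factorization
\begin{align}\label{factor}
u_h=F_h\circ u_0 .
\end{align}

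Next I would transport the Hopf differentials. Writing $\Psi(h):=(F_h^*h)^{2,0}\in \op{QD}(S,[h_0])$ for the Hopf differential of the identity-homotopic problem on $S$, the holomorphicity of $u_0$ implies that $u_0^*$ preserves bidegree and hence commutes with taking the $(2,0)$-part, so from \eqref{factor},
\begin{align}\label{pull}
\Phi(h)=\left((F_h\circ u_0)^*h\right)^{2,0}=u_0^*\left((F_h^*h)^{2,0}\right)=u_0^*\Psi(h).
\end{align}
By Sampson's theorem \cite{Sampson} (the identity-map case, cf. Theorem \ref{thm3}) the map $\Psi:\mc{T}(S)\to \op{QD}(S,[h_0])$ is injective. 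Moreover $u_0^*:\op{QD}(S,[h_0])\to \op{QD}(\Sigma,[u_0^*h_0])$ is injective: in a local holomorphic coordinate $u_0^*\psi$ reads $\psi(u_0(z))\,(u_0'(z))^2\,dz^2$, and since $u_0'\neq 0$ off the finite branch locus while $u_0$ is surjective, $u_0^*\psi=0$ forces $\psi\equiv 0$. Combining these, $\Phi(h_1)=\Phi(h_2)$ gives $u_0^*\Psi(h_1)=u_0^*\Psi(h_2)$, hence $\Psi(h_1)=\Psi(h_2)$, hence $h_1=h_2$ in $\mc{T}(S)$.

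The main obstacle I anticipate is the harmonicity of $F_h\circ u_0$ across the singular points of $u_0$: away from the branch locus this is the standard conformal invariance of harmonicity for maps from surfaces, but at the branch points one must rule out a singular contribution. The cleanest route is the tension-field argument above, namely that a smooth map whose tension field vanishes on a dense open set is harmonic; alternatively one may invoke a removable-singularity theorem for finite-energy harmonic maps, the finiteness of the energy being immediate from \eqref{min}. It is exactly this step that forces the choice $g=[u_0^*h_0]$ rather than an arbitrary conformal class, which is why the result is only a partial answer in the branched case.
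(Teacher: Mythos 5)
Your proposal is correct and follows essentially the same route as the paper: factor $u_h=F_h\circ u_0$ via uniqueness of harmonic maps and holomorphicity of $u_0:(\Sigma,[u_0^*h_0])\to(S,h_0)$, observe $\Phi(h)=u_0^*\bigl((F_h^*h)^{2,0}\bigr)$, show $u_0^*$ is injective on holomorphic quadratic differentials despite the branch locus, and conclude by Sampson's injectivity for the identity-homotopic case. The only cosmetic differences are that you justify harmonicity across the branch points by a tension-field continuity argument and injectivity of $u_0^*$ by trivial kernel, where the paper cites the standard composition fact and uses the Riemann removable singularity theorem.
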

\begin{proof}
	If $u_0:\Sigma\to S$ is a branched covering, then $u_0:(\Sigma,[u_0^*h_0])\to (S,h_0)$ is $\pm$-holomorphic for any hyperbolic metric $h_0$ on $S$. Denote by $h_1$ and $h_2$ the two hyperbolic metrics on $S$ such that $\Phi(h_1)=\Phi(h_2)$, and $u_i:(\Sigma,[u_0^*h_0])\to (S,h_i), i=1,2$ are the associated harmonic maps  homotopic to $u_0$. Let $f_i:(S,h_0)\to (S,h_i),i=1,2$ and $f:(S,h_1)\to (S,h_2)$ be the harmonic maps homotopic to identity. Please see the following diagram:
\[
\xymatrix@C=30pt@R=15pt{
&(\Sigma,[u_0^*h_0])\ar[lddd]_{u_1}\ar[rddd]^{u_2}\ar[dd]_{u_0}\\
\\
&(S,h_0)\ar[ld]_{f_1}\ar[rd]^{f_2}\\
(S,h_1)\ar[rr]^f&& (S,h_2)
}
\]
It is well known that the post-composition of harmonic map with a $\pm$-holomorphic map is also harmonic. Thus $f_i\circ u_0, i=1,2$ are harmonic maps, which implies $u_i=f_i\circ u_0$ by the uniqueness of harmonic maps, and $\Phi(h_1)=\Phi(h_2)$ is equivalent to $u_0^*(f_1^*h_1)^{2,0}=u_0^*(f_2^*h_2)^{2,0}$. This implies $(f_1^*h_1)^{2,0}=(f_2^*h_2)^{2,0}$. In fact, for any holomorphic quadratic differentials $\sigma_i(v)dv^2,i=1,2$ on $S$,  
$$u_0^*(\sigma_i(v)dv^2)=\sigma_i(u(z))((u_0)^v_z)^2dz^2.$$
Here, without loss of generality, we assume $u_0$ is holomorphic. If $u_0^*(\sigma_1(v)dv^2)=u_0^*(\sigma_2(v)dv^2)$, then $\sigma_1(v)=\sigma_2(v)$ outside the branch set of $u_0$. Since $\sigma_1(v)$ and $\sigma_2(v)$ are local holomorphic and uniformly bounded functions, so is $\sigma_1(v)-\sigma_2(v)$. By Riemann Removable Singularity Theorem, one has $\sigma_1(v)\equiv \sigma_2(v)$ on $S$. This completes the proof of $(f_1^*h_1)^{2,0}=(f_2^*h_2)^{2,0}$. From \cite[Theorem 12]{Sampson}, one has $h_1=h_2$ in $\mc{T}(S)$.
\end{proof}

\end{document}